\documentclass[12pt]{article}
\usepackage{latexsym, amsmath, amsfonts, amsthm, amssymb}
\usepackage{times}
\usepackage{a4wide}
\usepackage{graphicx, tocloft}
\usepackage{mathrsfs}

\def \RR {\mathbb R}

\def \EE {\mathbb E}

\def \PP {\mathbb P}

\def \cA {\mathcal A}

\def \cB {\mathcal B}

\def \Proj {\operatorname{Proj}}

\newtheorem{theorem}{Theorem}[section]

\newtheorem{lemma}[theorem]{Lemma}

\newtheorem{corollary}[theorem]{Corollary}

\theoremstyle{definition}

\def\myffrac#1#2 in #3{\raise 2.6pt\hbox{$#3 #1$}\mkern-1.5mu\raise 0.8pt\hbox{$
		#3/$}\mkern-1.1mu\lower 1.5pt\hbox{$#3 #2$}}

\def\qed{\hfill $\vcenter{\hrule height .3mm
		\hbox {\vrule width .3mm height 2.1mm \kern 2mm \vrule width .3mm
			height 2.1mm} \hrule height .3mm}$ \bigskip}

\def \Var {{ \rm Var}\,}

\newcommand{\ind}{1}
\newcommand{\HS}{\mathrm{HS}}

\begin{document}

\title{Revisiting the Bobkov-Chistyakov-G\"otze theorem
on approximately Gaussian marginals}
\date{}
\author{Boaz Klartag}
\maketitle

\begin{abstract}
We study one-dimensional
marginals of an isotropic, log-concave probability measure in $\RR^n$. We show that the Kolmogorov 
distance between a typical marginal and the standard
Gaussian distribution is at most $C/n$, for a universal constant $C > 0$. 
This bound is optimal in general, up to the value of the universal constant. It improves the Gaussian approximation bound of Bobkov,
Chistyakov and G\"otze by a logarithmic factor in the log-concave setting.
\end{abstract}

\section{Introduction}

We revisit and simplify a corollary of the quantitative central limit theorem of Bobkov, Chistyakov
and G\"otze~\cite{BCG}, which in turn is related to the works of Sudakov \cite{Sudakov},
Diaconis and Freedman
\cite{DF}, von Weizs\"acker \cite{Weizsacker} and Bobkov \cite{Bobkov}. For $t \in \RR$ 
write
$$
 \varphi(t)=\frac{e^{-t^2/2}}{\sqrt{2\pi}}
 \qquad \text{and} \qquad \Phi(t)=\int_{-\infty}^t\varphi(s)\,ds
$$
for the standard Gaussian density and cumulative distribution function.
We say that an absolutely continuous random vector $X$ in $\RR^n$
is \emph{log-concave} if it has density
of the form $e^{-V}$ in $\RR^n$, where
$V:\RR^n\to\RR\cup\{+\infty\}$ is convex. Since $e^{-\infty}=0$, all uniform distributions on convex bodies
are log-concave. We say that $X$ is {\it isotropic} if $\EE X=0$ and
$\EE X_iX_j=\delta_{ij}$ for all $i,j=1,\ldots,n$.
For
$\theta\in S^{n-1}$ set
$$
 F_\theta(t)=\PP\{\langle X,\theta\rangle\leq t\}.
 $$
The Kolmogorov distance between the random variable $\langle X,\theta\rangle$ and the standard Gaussian random variable is $$
 d_\theta=\sup_{t\in\RR}|F_\theta(t)-\Phi(t)|.
$$
Write $\sigma_{n-1}$ for the unique rotationally-invariant probability measure on
the sphere $S^{n-1} = \{ x \in \RR^n \, ; \, |x|=1 \}$.

\begin{theorem}\label{thm_main}
Let $X$ be an isotropic, log-concave random vector in $\RR^n$.
Then,
$$
 \sqrt{ \int_{S^{n-1}}d_\theta^2\,d\sigma_{n-1}(\theta) }
 \leq \frac{C}{n},
$$
where $C>0$ is a universal constant.
\end{theorem}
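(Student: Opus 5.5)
The plan is to work with characteristic functions and Esseen's smoothing inequality, averaged over $\theta\in S^{n-1}$. Write $f_\theta(r)=\EE e^{ir\langle X,\theta\rangle}$, and note that $f_\theta(r)=\hat\mu(r\theta)$, where $\hat\mu$ is the Fourier transform of the law of $X$. For $T\sim n$, Esseen's inequality gives
$$ d_\theta \leq C\int_{-T}^{T}\frac{|f_\theta(r)-e^{-r^2/2}|}{|r|}\,dr + \frac{C}{T}. $$
The $C/T$ term is already of order $1/n$. The Gaussian comparison need not be made against $\varphi$ directly. Instead I compare $\langle X,\theta\rangle$ with the average marginal $\int F_\theta\,d\sigma_{n-1}(\theta)$, which is the law of $|X|\,\Theta_1$ with $\Theta$ uniform on the sphere and independent of $X$. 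By thin-shell type information this average marginal is within $C/n$ of $\Phi$ in Kolmogorov distance. Concretely, a variance bound $\Var|X|^2\le Cn$ (Eldan–Klartag/Chen/KL-type, with $|X|^2$ concentrated around $n$ with fluctuations $\sqrt n$) gives this. The mean-zero, symmetric structure of the $\Theta_1$ mixture cancels first-order terms, so one gets $1/n$ rather than $1/\sqrt n$.

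So it suffices to bound $\int_{S^{n-1}}\left(\int_{-T}^T |f_\theta(r)-\bar f(r)|\,\frac{dr}{|r|}\right)^2 d\sigma_{n-1}(\theta)$, where $\bar f(r)=\int f_\theta(r)\,d\sigma_{n-1}(\theta)$. By Cauchy–Schwarz in $r$ with a weight, this reduces to controlling the spherical variance $\int |f_\theta(r)-\bar f(r)|^2 d\sigma_{n-1}(\theta)$ for each fixed $r$.

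For the variance I will use independent copies $X,Y$ of the random vector:
$$ \int |f_\theta(r)|^2 d\sigma_{n-1} = \EE\, J_n(r|X-Y|), $$
where $J_n(s)=\int_{S^{n-1}} e^{is\theta_1}d\sigma_{n-1}(\theta)$ is the Fourier transform of a sphere marginal. Similarly, $|\bar f(r)|^2=\EE\,\int\!\!\int e^{ir(\langle X,\theta\rangle-\langle Y,\eta\rangle)}$. The difference is then expressed through the distribution of $|X-Y|$ versus that of $|X|,|Y|$, that is, through $\langle X,Y\rangle$. Expanding $J_n(s)\approx e^{-s^2/2n}(1+O(s^4/n^2))$, I need estimates of the form $\EE\langle X,Y\rangle^2=n$ and the concentration $\Var|X|^2\le Cn$.

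For small $|r|\lesssim 1$ this gives a variance of order $r^4/n^2\cdot(\ldots)$, which integrates against $dr/|r|$ to $O(1/n^2)$ after squaring. For large $|r|$, up to $T\sim n$, the integrand is instead controlled by exponential decay: $\EE e^{-r^2|X-Y|^2/2n}$ is small because $|X-Y|^2\approx 2n$ with log-concave small-ball bounds. This yields a factor $e^{-cr^2}$ up to the small-ball error $\PP(|X-Y|\le \epsilon\sqrt n)\le (C\epsilon)^{cn}$ (Paouris small ball). The small-ball error is negligible for $T\sim n$.

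The main obstacle is getting exactly $1/n$ rather than $\log n/n$ in the mid-range $1\lesssim |r|\lesssim \sqrt n$. There the naive Esseen weight $dr/|r|$ produces a logarithm, which is precisely the loss in Bobkov–Chistyakov–Götze. To avoid it, I will use the $L^2$-in-$\theta$ structure to apply Plancherel in $r$ rather than Esseen pointwise, bounding $\|F_\theta-\bar F\|_\infty$ through a weighted $L^2$ norm of $(f_\theta-\bar f)/r$. Combined with the Gaussian decay $e^{-cr^2}$ of the variance term for $|r|\gtrsim 1$, this yields a convergent integral $\int r^{-2}\min(r^4,1)\,e^{-cr^2}\,dr/n^2$, with no logarithmic divergence.
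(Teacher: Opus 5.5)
Your final pivot (bound $\|F_\theta-\bar F\|_\infty^2$ by $\|H\|_2^2+\|H'\|_2^2$, apply Plancherel in $r$, average over $\theta$) is the skeleton of the paper's argument. Your comparison of $\bar F$ with $\Phi$ via $\Var|X|^2\le Cn$ is correct. The gap is in the one step that carries all the difficulty: the claim $V(r):=\int_{S^{n-1}}|f_\theta(r)-\bar f(r)|^2\,d\sigma_{n-1}(\theta)\lesssim\min(r^4,1)e^{-cr^2}/n^2$. The inputs you list for it ($\EE\langle X,Y\rangle^2=n$ and thin shell) are not enough.

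Expand $V(r)=\EE[J_n(r|X-Y|)-J_n(r|X|)J_n(r|Y|)]$ in powers of $\langle X,Y\rangle$. The cubic order produces $\EE\langle X,Y\rangle^3=\sum_{i,j,k}(\EE X_iX_jX_k)^2=\|T\|_{\HS}^2$, with coefficient about $r^6e^{-r^2}/(6n^3)$. Equivalently, since $\bar f$ is real, $V(r)\ge\int(\EE\sin(r\langle X,\theta\rangle))^2\,d\sigma_{n-1}(\theta)$. Its leading part is $\frac{r^6}{36}\int\kappa_3(\theta)^2\,d\sigma_{n-1}(\theta)=\frac{r^6}{36}\cdot\frac{6\|T\|_{\HS}^2+9|b|^2}{n(n+2)(n+4)}$, where $\kappa_3(\theta)=\EE\langle X,\theta\rangle^3$ and $b=\EE|X|^2X$. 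This contribution is nonnegative and nothing cancels it; in the paper's exact identity it is the term $(12|b|^2+8\|T\|_{\HS}^2)/(2n)^3$. So your bound requires $\|T\|_{\HS}^2\le Cn$.

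That is the Chen--Klartag estimate $\sum_{i,j,k}(\EE Z_iZ_jZ_k)^2\le 4n$, the paper's second deep input alongside thin shell. It does not follow from $\Var|X|^2\le Cn$, which only handles $|b|^2$ (by Cauchy--Schwarz). The elementary reverse H\"older bound $\EE|\langle X,Y\rangle|^3\le C\,\EE|Y|^3$ gives only $\|T\|_{\HS}^2\le Cn^{3/2}$, i.e.\ the rate $n^{-3/4}$.

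The rest of that step is also unsupported as written.
\begin{itemize}
\item Your expansion $J_n(s)\approx e^{-s^2/2n}(1+O(s^4/n^2))$ has error $O(r^4)$ at $s=r|X-Y|\approx r\sqrt{2n}$. The true next term is $-s^4/(4n^2(n+2))$ (from the fourth cumulant of $\theta_1$), of relative size $r^4/n$. It must be kept exactly, because it cancels the $r^4/n$ term coming from $\Var|X-Y|^2-\Var(|X|^2+|Y|^2)=4\EE\langle X,Y\rangle^2=4n$. This is your analogue of the paper's cancellation of $u^0,u^1,u^2$ by isotropy.
\item Gaussian decay of $\EE J_n(r|X-Y|)$ for $r\gtrsim1$ supplies no factor $1/n^2$, since $\int_1^\infty e^{-cr^2}\,dr$ is a constant. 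You need the expansion with an $O(1/n^2)$ remainder uniformly up to $r\approx\sqrt{C\log n}$. That calls for higher moments such as $\EE\bigl||Z|^2-n\bigr|^8\le Cn^4$ (reverse H\"older for polynomials).
\item You also need control of small $|X-Y|$, including for $|r|>n$, since Plancherel integrates over all of $\RR$. (Paouris's small-ball bound is $(C\epsilon)^{c\sqrt n}$, not $(C\epsilon)^{cn}$, though either suffices.)
\end{itemize}

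The paper avoids all pointwise Bessel analysis by comparing directly with $\Phi$ and integrating in $r$ first. Then $\cA$ and $\cB$ become multiples of the energies $\iint|x-y|^{\pm1}\,d\nu(x)\,d\nu(y)$ with $\nu=\mu-\gamma_n$. Isotropy kills the first three Taylor terms in $u=|x-y|^2/(2n)-1$, the cubic term is computed exactly, and the remainder needs only the eighth moment of $|Z|^2-n$ and $\EE|Z|^{-2}\le C/n$.
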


\newpage
By the Markov-Chebyshev inequality, Theorem~\ref{thm_main} immediately implies the following:

\begin{corollary}\label{cor_main}
Under the assumptions of Theorem~\ref{thm_main}, there exists a measurable set
$\Theta\subseteq S^{n-1}$ with $\sigma_{n-1}(\Theta)\geq9/10$ such that
for all $\theta \in \Theta$,
$$
 \sup_{t\in\RR}|F_\theta(t)-\Phi(t)|
 \leq \frac{C}{n},
$$
where $C>0$ is a universal constant.
\end{corollary}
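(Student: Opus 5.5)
The plan is to apply Chebyshev's inequality to the nonnegative function $\theta \mapsto d_\theta$ on $S^{n-1}$, using the second-moment bound of Theorem~\ref{thm_main}. Write $C_0$ for the universal constant from Theorem~\ref{thm_main}, so that
$$
\int_{S^{n-1}} d_\theta^2 \, d\sigma_{n-1}(\theta) \leq \frac{C_0^2}{n^2}.
$$
For any $s > 0$, Chebyshev's inequality then gives
$$
\sigma_{n-1}\left( \{ \theta \, ; \, d_\theta > s \} \right) \leq \frac{1}{s^2} \int_{S^{n-1}} d_\theta^2 \, d\sigma_{n-1}(\theta) \leq \frac{C_0^2}{s^2 n^2}.
$$
Take $s = \sqrt{10}\, C_0 / n$, so that the right-hand side equals $1/10$. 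Define $\Theta = \{ \theta \in S^{n-1} \, ; \, d_\theta \leq \sqrt{10}\, C_0/n \}$. Then $\sigma_{n-1}(\Theta) \geq 9/10$, and the conclusion holds with the universal constant $C = \sqrt{10}\, C_0$.

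The only point that needs a word of justification is that $\theta \mapsto d_\theta$ is measurable, so that both $\Theta$ and the integral in Theorem~\ref{thm_main} make sense. First, since $X$ is absolutely continuous, each marginal $\langle X, \theta\rangle$ has no atoms. Hence $t \mapsto F_\theta(t)$ is continuous, and the supremum defining $d_\theta$ may be taken over rational $t$ only.

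Second, for each fixed $t$, the map $\theta \mapsto F_\theta(t) = \EE\, 1_{\{\langle X,\theta\rangle \leq t\}}$ is continuous in $\theta$. Indeed, as $\theta_k \to \theta$, the indicators converge pointwise off the hyperplane $\{x \, ; \, \langle x,\theta\rangle = t\}$. That hyperplane has measure zero for the law of $X$, so dominated convergence applies.

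Consequently $d_\theta$ is a countable supremum of continuous functions of $\theta$. It is therefore lower semicontinuous, hence Borel measurable, and $\Theta$ is a Borel set. There is no real obstacle here: all the substance lies in Theorem~\ref{thm_main}, and the corollary is a direct consequence of it.
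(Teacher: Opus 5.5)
Your proof is correct and is the paper's own argument: Chebyshev's inequality applied to the second-moment bound of Theorem~\ref{thm_main}, with threshold $\sqrt{10}\,C_0/n$. Your measurability check is also sound and fills in a detail the paper leaves implicit: $d_\theta$ is a supremum over rational $t$ of functions that are continuous in $\theta$.
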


The constant $9/10$ may be replaced by any other universal constant in $(0,1)$,
at the expense of modifying the value of the universal constant $C$. Up to the value of the universal constant,
the bound in Corollary \ref{cor_main} is optimal in general, as can be seen by considering
the uniform probability measure on a cube (see e.g. \cite[Theorem~16.2]{BCGexp}). Up to logarithmic factors, Corollary \ref{cor_main} follows from the work of Bobkov, Chistyakov and G\"otze \cite{BCG}
and Klartag and Lehec \cite{KL}. See also \cite[Corollary~17.3]{BCGexp}.

\medskip The rate $1/n$ from Corollary \ref{cor_main} was obtained by Klartag and Sodin
\cite[Theorem~1.1]{KS} for isotropic random vectors with {\it independent} coordinates
and uniformly bounded fourth moments, without the log-concavity assumption. The same rate $1/n$ was also obtained by Klartag
\cite[Section~6]{KUnconditional} for isotropic log-concave random
vectors with an unconditional distribution, that is, a distribution
invariant under coordinate reflections. In fact, it is shown
there that under these symmetry assumptions, $d_\theta\leq C\sum_{i=1}^n\theta_i^4$ for every
$\theta\in S^{n-1}$.

\medskip As in
\cite{BCGexp}, we will work mostly with $L^2$-distances rather than
 Kolmogorov distances. For $\theta \in S^{n-1}$ write
 $p_\theta$ for the
density of $\langle X,\theta\rangle$.

\begin{theorem} Under the assumptions of Theorem~\ref{thm_main},
for $n\geq 4$,
\begin{equation}\label{eq_intro_L2}
 \int_{S^{n-1}}
 \left(\|F_\theta-\Phi\|_2^2+\|p_\theta-\varphi\|_2^2\right)
 \,d\sigma_{n-1}(\theta)
 \leq \frac{C}{n^2},
\end{equation}
where $C > 0$ is a universal constant.
\label{thm_main2}
\end{theorem}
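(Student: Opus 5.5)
The plan is to pass to Fourier space and use Plancherel, as in the Bobkov--Chistyakov--G\"otze approach. Write $f_\theta(s)=\EE e^{is\langle X,\theta\rangle}$ and $g(s)=e^{-s^2/2}$. Then
$$\|p_\theta-\varphi\|_2^2=\frac1{2\pi}\int_{\RR}|f_\theta(s)-g(s)|^2ds ,\qquad \|F_\theta-\Phi\|_2^2=\frac1{2\pi}\int_{\RR}\frac{|f_\theta(s)-g(s)|^2}{s^2}ds .$$
So it suffices to show that
$$\int_{\RR}\min\{1,s^{-2}\}\cdot(1+s^2)\int_{S^{n-1}}|f_\theta(s)-g(s)|^2\,d\sigma_{n-1}(\theta)\,ds\le \frac{C}{n^2},$$
weighted appropriately in $s$. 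Averaging over $\theta$ turns the problem into one about the radial structure of $X$. Take $X'$ to be an independent copy of $X$. Then
$$\int |f_\theta(s)|^2d\sigma_{n-1} = \EE J_n(s|X-X'|)\qquad\text{and}\qquad \int f_\theta(s)\,d\sigma_{n-1}=\EE J_n(s|X|),$$
where $J_n(r)=\int_{S^{n-1}}e^{ir\theta_1}d\sigma_{n-1}(\theta)$ is the characteristic function of $\theta_1$.

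I would split the range of $s$ into two parts.

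\emph{Small frequencies, $|s|\le c\sqrt n$.} Here I would use the expansion
$$J_n(r)=e^{-r^2/(2n)}\Bigl(1-\frac{r^4}{4n^3}+\dots\Bigr)+O\bigl(r^4/n^3\cdot e^{-cr^2/n}\bigr).$$
It replaces $J_n(s|x|)$ by $e^{-s^2|x|^2/(2n)}$ plus an explicit fourth-order correction. The averaged quantity
$$\EE J_n(s|X-X'|)-2g(s)\EE J_n(s|X|)+g(s)^2$$
then becomes a second-order Taylor expression in the normalized variables $\xi=(|X|^2-n)/n$, $\xi'=(|X-X'|^2-2n)/(2n)$ and $\langle X,X'\rangle/n$. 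The zeroth and first orders cancel by isotropy. The second-order terms are controlled by $\Var|X|^2$ and by $\EE\langle X,X'\rangle^2=n$. The key input is the thin-shell bound in the strong form $\Var|X|^2\le Cn$, which follows from Klartag--Lehec's bound $C_P\le C\log n$ combined with the improved variance bound (or the current near-constant KLS bounds). This gives $\EE\xi^2\le C/n$, while $\EE(\langle X,X'\rangle/n)^2=1/n$. Because the terms cancel exactly at order $1/n$, I need the bound at order $1/n^2$ with factor $s^4e^{-cs^2}$. After integrating against $(1+s^2)$, this yields $C/n^2$.

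This cancellation is the main obstacle. A naive bound gives only $s^4\,\EE\xi^2\sim 1/n$ inside the square. The leading terms, $\Var|X|^2/n^2$ in both the $X-X'$ and the $X$ parts and $\EE\langle X,X'\rangle^2$, must cancel identically, as happens in the Gaussian case. What remains must be third or fourth moments: $\EE\langle X,X'\rangle^4/n^4$, $\EE\xi^3$, and mixed terms like $\EE\langle X,X'\rangle^2\xi/n^2$. I would bound these by $C/n^2$ using log-concave moment comparison (Borell's lemma, reverse H\"older for polynomials of degree $\le 4$). For the mixed term, I would use isotropy to rewrite it as $\EE\langle X,X'\rangle^2(|X|^2-n)/n^3=\EE(|X|^2-n)|X|^2/n^3=\Var|X|^2/n^3$. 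This is $O(1/n^2)$ by thin shell. Cancelling the problematic $\Var|X|^2/n^2$ terms requires the exact covariance-type identity; I expect it to come from matching the coefficient of $s^4$ in $\EE e^{-s^2|X-X'|^2/(4n)}$ and $(\EE e^{-s^2|X|^2/(2n)})^2$.

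\emph{Large frequencies, $|s|\ge c\sqrt n$.} Both $g$ and the weight $\min\{1,s^{-2}\}(1+s^2)$ cause no issue for $g$. For $f_\theta$ I use that $\int|f_\theta|^2d\sigma=\EE J_n(s|X-X'|)$ and that $|J_n(r)|\le Ce^{-cr^2/n}$ for $r\le n$ and $|J_n(r)|\le (C\sqrt n/r)^{(n-1)/2}$ beyond. Small-ball estimates for log-concave vectors, $\PP(|X-X'|\le\varepsilon\sqrt n)\le(C\varepsilon)^{cn}$ (Paouris), then give exponentially small contributions for $|s|\le n^{C}$. For truly large $s$ I need decay of $\int|f_\theta(s)|^2 s^2\,d\sigma$. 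I would use the fact that $\int|f_\theta|^2s^2ds=2\pi\|p_\theta'\|_2^2$ restricted to high frequencies. I would bound it via $\|p_\theta\|_\infty\le C$ and the log-concavity of $p_\theta$: a log-concave density has $\int|\hat p(s)|^2 s^2\,ds$ possibly infinite, so for this term I would replace it by the bound $|f_\theta(s)|\le C/|s|$ (valid for log-concave densities with bounded variation $2\|p_\theta\|_\infty$). Combined with the Bessel averaging, this makes the tail $e^{-cn}\le C/n^2$; the restriction $n\ge4$ ensures the needed exponent of $J_n$ exceeds what the $s^2$ weight costs.
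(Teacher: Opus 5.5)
There is a genuine gap in the third-order part of your small-frequency analysis. Set $\eta=\langle X,X'\rangle/n$ and $\xi_X=(|X|^2-n)/n$. Then $\xi'=(\xi_X+\xi_{X'})/2-\eta$, so $\EE\xi'^3$ contains two further terms:
\begin{itemize}
\item $-\EE\eta^3=-\|T\|_{\HS}^2/n^3$, where $T=(\EE X_iX_jX_k)_{i,j,k}$, since $\EE\langle X,X'\rangle^3=\sum_{i,j,k}(\EE X_iX_jX_k)^2$;
\item $-\tfrac32\EE\xi_X\xi_{X'}\eta=-\tfrac32|b|^2/n^3$, where $b=\EE|X|^2X$.
\end{itemize}
Your list of surviving terms omits both; odd moments of $\langle X,X'\rangle$ vanish only when $X$ is symmetric. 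They also do not cancel. In the paper's formulation the entire cubic term is $\int u^3\,d\nu\,d\nu=-(12|b|^2+8\|T\|_{\HS}^2)/(2n)^3$, and it enters both $\cA$ and $\cB$ with positive coefficients bounded below. So, up to the $O(n^{-2})$ quartic remainder, the quantity you are bounding is at least $c(|b|^2+\|T\|_{\HS}^2)/n^3$.

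Hence no argument can avoid proving $\|T\|_{\HS}^2\le Cn$, and Borell's lemma or reverse H\"older cannot do it. They give only $\EE|\langle X,X'\rangle|^3\le C(\EE\langle X,X'\rangle^2)^{3/2}=Cn^{3/2}$, i.e.\ a contribution of order $n^{-3/2}$. The Poincar\'e route with $C_P\le C\log n$ gives $\|T\|_{\HS}^2\le Cn\log n$ and still loses a logarithm. The missing ingredient is the Chen--Klartag bound $\|T\|_{\HS}^2\le 4n$, together with $|b|\le\sqrt{\Var|X|^2}\le C\sqrt n$ from Cauchy--Schwarz, isotropy and thin shell.

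In the same vein, $\EE\xi^3$ is not $O(n^{-2})$ in general. For a Gaussian conditioned on a centered ball and rescaled to be isotropic, it is of order $n^{-3/2}$. It must cancel rather than be bounded, and it does: $e^{-s^2}\EE e^{-s^2\xi'}$ contributes $-\tfrac{s^6}{24}e^{-s^2}\EE\xi^3$, while $-2e^{-s^2}\EE e^{-s^2\xi/2}$ contributes $+\tfrac{s^6}{24}e^{-s^2}\EE\xi^3$.

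Several other points need repair but are fixable.
\begin{itemize}
\item The error $O(r^4n^{-3}e^{-cr^2/n})$ in your expansion of $J_n$ is as large as the quartic correction it accompanies. You need it at the level $O\bigl((r^6n^{-5}+r^8n^{-6})e^{-cr^2/n}\bigr)$.
\item All Taylor remainders in $\xi,\xi',\eta$ must carry a factor $e^{-cs^2}$, for example by treating the event $\{|X-X'|^2<n\}$ separately. A bound like $Cs^8\EE\xi'^4\le Cs^8/n^2$ integrates over $|s|\le c\sqrt n$ to order $n^{5/2}$.
\item The weight $\min\{1,s^{-2}\}(1+s^2)$ lies between $1$ and $2$; the correct weight is $1+s^{-2}$.
\item $C_P\le C\log n$ yields only $\Var|X|^2\le Cn\log n$. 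The bound $\Var|X|^2\le Cn$ is the Klartag--Lehec thin-shell theorem.
\item The high-frequency part is essentially sound once the Bessel tail is read as $(Cn/r)^{(n-1)/2}$. Paouris' small-ball exponent $c\sqrt n$ suffices there, and $|f_\theta(s)|\le 2\|p_\theta\|_\infty/|s|$ handles $|s|\ge n^3$.
\end{itemize}
The paper avoids all this bookkeeping. Integrating your expression in $s$ turns it, up to constants, into the energies of $\nu\otimes\nu$ (with $\nu=\mu-\gamma_n$) against $|x-y|^{-1}$ and $|x-y|$. Taylor expansion of $(1+u)^{\pm1/2}$ in $u=|x-y|^2/(2n)-1$ then makes orders $0,1,2$ vanish automatically and isolates the cubic term exactly.
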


We will deduce Theorem \ref{thm_main} from Theorem \ref{thm_main2}.
To prove Theorem \ref{thm_main2}, we represent the two terms in~\eqref{eq_intro_L2} by
the kernels $|x-y|$ and $|x-y|^{-1}$, integrated against
$(\mu-\gamma_n)\otimes(\mu-\gamma_n)$, where $\mu$ is the law of $X$
and $\gamma_n$ is the standard Gaussian measure in $\RR^n$. Isotropicity cancels
the first three terms in an appropriate Taylor expansion, and the remaining terms are
controlled by the recent estimates of Klartag and Lehec \cite{KLthin} and Chen and Klartag \cite{CK}.

\medskip
Throughout the proof, $C, \tilde{C} >0$ denote universal constants whose values
may change from one line to the next. All $L^2$-norms of functions
on the real line are with respect to the Lebesgue measure.
We write $x \cdot y = \langle x, y \rangle = \sum_{i=1}^n x_i y_i$ for the standard scalar
product between $x, y \in \RR^n$, while  $|x| = \sqrt{\langle x, x \rangle}$.

\medskip {\it Acknowledgements.} The proofs in this paper are mostly simplifications by ChatGPT
of the arguments in \cite{BCG, BCGexp}. The role of the author has been to propose the project, to understand
and check the proofs, and to reformulate and simplify them further. ChatGPT was also used for proofreading.
Supported by a grant from the Israel Science Foundation (ISF~$\# 674/24$).

\section{Spherical averages of distribution functions and densities}

Let $\mu$ be the law of the isotropic, log-concave random vector $X$ in $\RR^n$. 
Let $\gamma_n$ be the standard Gaussian
measure on $\RR^n$.
Set $$ \nu=\mu-\gamma_n, $$
and define
$$
 \cA=\int_{S^{n-1}}\|F_\theta-\Phi\|_2^2\,d\sigma_{n-1}(\theta)
 \qquad \text{and} \qquad
 \cB=\int_{S^{n-1}}\|p_\theta-\varphi\|_2^2\,d\sigma_{n-1}(\theta).
$$
The proofs in this section use mostly the isotropicity of the random vector $X$. Its log-concavity will be extensively used only in the next section.

\begin{lemma}\label{lem_energies}
For $n\geq 4$,
\begin{align}
 \cA= -C_n
 \int_{\RR^n\times\RR^n} \frac{|x-y|}{\sqrt{2 n}}\,d\nu(x)d\nu(y),
 \label{eq_energy_CDF}
 \end{align}
 and
 \begin{align}
 \cB= C_n'
 \int_{\RR^n\times\RR^n} \frac{\sqrt{2 n}}{|x-y|}\,d\nu(x)d\nu(y),
 \label{eq_energy_density}
\end{align}
where $C_n, C_n' > 0$ depend only on the dimension $n$, with $C_n \leq 1$ and $C_n' \leq C$.
\end{lemma}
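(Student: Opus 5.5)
Both identities follow from one-dimensional Fourier/energy formulas, averaged over the sphere. Write $\nu_\theta$ for the push-forward of $\nu$ under $x\mapsto\langle x,\theta\rangle$. This is a signed measure on $\RR$ of total mass zero, with distribution function $F_\theta-\Phi$ and density $p_\theta-\varphi$.

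\textbf{Step 1: the one-dimensional identities.} For a finite signed measure $\eta$ on $\RR$ with $\eta(\RR)=0$ and a finite first moment, I claim
$$\|G\|_2^2=-\frac12\int\!\!\int |s-t|\,d\eta(s)d\eta(t),$$
where $G(t)=\eta((-\infty,t])$. To prove it, write $G(u)=\int 1_{\{s\le u\}}d\eta(s)$ and expand $G^2$ as a double integral. Then use
$$\int_\RR\bigl(1_{\{s\le u\}}-1_{\{0\le u\}}\bigr)\bigl(1_{\{t\le u\}}-1_{\{0\le u\}}\bigr)du=\frac{|s|+|t|-|s-t|}{2}.$$
The mass-zero condition kills the $|s|$ and $|t|$ terms, and the subtraction of $1_{\{0\le u\}}$ makes everything absolutely integrable. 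This is Cramér's energy identity, which one can also see via Plancherel.

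Similarly, for a density $q$ (here $q=p_\theta-\varphi$) we have $\|q\|_2^2=\frac1{2\pi}\int|\hat q(\xi)|^2d\xi$. I would express $\int|s-t|^{-\alpha}$-type kernels through $|\xi|^{\alpha-1}$ multipliers. In one dimension, however, $|s-t|^{-1}$ is not locally integrable, so $\cB$ cannot be written directly on $\RR$. The kernel $|x-y|^{-1}$ only appears after averaging over $\theta$ in $\RR^n$.

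\textbf{Step 2: spherical averaging.} For the CDF term, integrate the identity of Step 1 over $\theta$ and use Fubini. This gives the kernel
$$\int_{S^{n-1}}|\langle x-y,\theta\rangle|\,d\sigma_{n-1}(\theta)=c_n|x-y|,\qquad c_n=\int|\theta_1|\,d\sigma_{n-1}\sim\sqrt{2/(\pi n)}.$$
Hence $\cA=-\frac{c_n}{2}\int\!\!\int|x-y|\,d\nu d\nu$, that is, $C_n=c_n\sqrt{2n}/2=\sqrt{n/2}\,\EE|\theta_1|$. The bound $C_n\le1$ follows from $\EE|\theta_1|\le(\EE\theta_1^2)^{1/2}=1/\sqrt n$, which gives $C_n\le 1/\sqrt2$.

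For the density term, write
$$\cB=\frac1{2\pi}\int_{S^{n-1}}\int_\RR|\hat\nu(r\theta)|^2\,dr\,d\sigma_{n-1}(\theta),$$
using $\hat\nu_\theta(r)=\hat\nu(r\theta)$. In polar coordinates this equals $\frac{2}{2\pi\,|S^{n-1}|}\int_{\RR^n}|\hat\nu(\xi)|^2|\xi|^{1-n}d\xi$. Then use the Riesz-potential fact that the Fourier transform of $|x|^{-1}$ in $\RR^n$ is $a_n|\xi|^{1-n}$, with explicit $a_n=\pi^{n/2}2^{n-1}\Gamma((n-1)/2)/\Gamma(1/2)$ (up to normalization). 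This gives $\int|\hat\nu|^2|\xi|^{1-n}=a_n^{-1}(2\pi)^n\int\!\!\int|x-y|^{-1}d\nu d\nu$.

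\textbf{Step 3: justification and the constant.} The finiteness needed is that $\mu$ has a bounded density (log-concave) and finite moments. Then $\hat\nu$ is bounded and $|\hat\nu(\xi)|\le C|\xi|^2$ near $0$, because the mean is zero and the covariances match. So $|\hat\nu|^2|\xi|^{1-n}$ is integrable near $0$. Near infinity, integrability follows from $\cB$ finite, since $p_\theta$ is bounded. This is also where $n\ge4$ is a comfortable condition ensuring $|x-y|^{-1}$ is integrable against $|\nu|\otimes|\nu|$ and the Fubini/Parseval steps are legitimate. I would justify the passage by regularizing, using $|x|^{-1}e^{-\epsilon|x|^2}$ or Gaussian convolution, and letting $\epsilon\to0$.

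Finally, collect the constants with Gamma functions to get $C_n'=\frac{\Gamma(n/2)}{\sqrt{2n}\,\sqrt\pi\,\Gamma((n-1)/2)}$, up to the precise normalization. Stirling gives $\Gamma(n/2)/\Gamma((n-1)/2)\sim\sqrt{n/2}$, so $C_n'$ is bounded by a universal constant.

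The main obstacle is Step 3: the bookkeeping of the Riesz constants and the justification of the Parseval identity with the singular kernel. The CDF part is routine.
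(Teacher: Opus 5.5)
Your proposal is correct and follows the paper's proof essentially step by step. For $\cA$ you use a one-dimensional energy identity averaged over the sphere; your polarized identity with $1_{\{0\le u\}}$ subtracted is equivalent to the paper's $|r-s|=\int_\RR(1_{\{r\le t\}}-1_{\{s\le t\}})^2\,dt$. For $\cB$ you use Plancherel, polar coordinates and the Riesz-kernel Fourier transform, as the paper does, and your constants $C_n=\sqrt{n/2}\int_{S^{n-1}}|x_1|\,d\sigma_{n-1}(x)\le 1/\sqrt{2}$ and $C_n'=\Gamma(n/2)/(\sqrt{2\pi n}\,\Gamma((n-1)/2))$ agree exactly with the paper's.
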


\begin{proof} Begin with the proof of (\ref{eq_energy_CDF}). Write
$$ C_n = \sqrt \frac{{n}}{2} \int_{S^{n-1}} |x \cdot \theta| d\sigma_{n-1}(x) \leq \sqrt \frac{{n}}{2} \sqrt{ \int_{S^{n-1}} (x \cdot \theta)^2 d\sigma_{n-1}(x) } = \sqrt \frac{1}{2}, $$
where $\theta \in S^{n-1}$ is any unit vector. Note that the expression on the right-hand side of (\ref{eq_energy_CDF}) equals
 $$ -\frac{1}{2} \int_{S^{n-1}} \int_{\RR^n\times\RR^n}|(x  - y) \cdot \theta  |\,d\nu(x)d\nu(y) d \sigma_{n-1}(\theta). $$
Hence, in order to prove (\ref{eq_energy_CDF}), it suffices to show that for any fixed $\theta \in S^{n-1}$,
\begin{equation}
 \int_{\RR^n\times\RR^n}|x \cdot \theta \, - y \cdot \theta |\,d\nu(x)d\nu(y) = -2 \| F_{\theta} - \Phi \|_2^2.
\label{eq_1849} \end{equation}
Write $\lambda$ for the push-forward of the measure $\nu$ under the map $x \to \langle x, \theta \rangle$. Note that
$$ \lambda(\RR) = 0 $$
while $\lambda$ has a bounded density decaying exponentially at infinity. For $t \in \RR$ denote
$$ H(t) := \lambda ( (-\infty, t] ) = F_{\theta}(t) - \Phi(t). $$
For any $s, r \in \RR$ we have the elementary identity
\begin{equation}
 |r-s|=\int_\RR
 \bigl(\ind_{\{r\leq t\}}-\ind_{\{s\leq t\}}\bigr)^2\,dt.
\label{eq_1836} \end{equation}
By (\ref{eq_1836}) and expanding the square,
\begin{align*}
  & \int_{\RR^n\times\RR^n}   |x \cdot \theta \, - y \cdot \theta |\,d\nu(x)d\nu(y)  = \int_{\RR^2} |r-s| d \lambda(r) d \lambda(s)  \\ & \phantom{bla} = \int_{\RR^3} \bigl(\ind_{\{r\leq t\}}-\ind_{\{s\leq t\}}\bigr)^2\, d \lambda(r) d \lambda(s)  dt
 = \int_{\RR} \left[ 2 \lambda(\RR) H(t) - 2 H^2(t) \right] dt  = -2 \int_{\RR} H^2(t) dt, \end{align*}
proving (\ref{eq_1849}). This yields (\ref{eq_energy_CDF}). There are several proofs of the identity (\ref{eq_energy_density}),
and let us present here a Fourier-analytic one, which is non-elementary but perhaps explains how one comes up with these formulae. Consider the Fourier transform
$$
 \widehat{\nu}(\xi)
 =\int_{\RR^n}e^{-2\pi i\langle x,\xi\rangle}\,d\nu(x) \qquad \qquad \qquad (\xi \in \RR^n).
$$
Write $\omega_{n-1}=2\pi^{n/2}/\Gamma(n/2)$ for the surface area of
$S^{n-1}$. For $\theta\in S^{n-1}$, write $g_\theta=p_\theta-\varphi$
for the density of $\lambda$.
Since $g_\theta$ is the density of the push-forward of $\nu$ under
$x\mapsto\langle x,\theta\rangle$, we have
$$
 \widehat{g_\theta}(s)=\widehat{\nu}(s\theta)
\qquad\qquad \qquad (s\in\RR).
$$
By Plancherel's theorem and integration in polar coordinates,
\begin{align}
 \cB
  =\int_{S^{n-1}}\int_\RR
   |g_{\theta}(t)|^2\,dt\,d\sigma_{n-1}(\theta)
 =\int_{S^{n-1}}\int_\RR
   |\widehat{\nu}(s\theta)|^2\,ds\,d\sigma_{n-1}(\theta)
 =\frac{2}{\omega_{n-1}}
   \int_{\RR^n}
   \frac{|\widehat{\nu}(\xi)|^2}{|\xi|^{n-1}}\,d\xi.
   \label{eq_1918}
\end{align}
The function $1/|\xi|^{n-1}$ is locally-integrable and it is in fact a tempered distribution.
The standard formula for the Fourier transform of the
Riesz kernel (see, e.g., Mattila \cite[Sections~3.4 and~3.5]{Mattila}) states that
$$
 \widehat{|x|^{-1}}(\xi)
 =\frac{\Gamma((n-1)/2)}{\pi^{(n-1)/2}}
 \frac{1}{|\xi|^{n-1}},
$$
in the sense of tempered distributions. The corresponding
Riesz energy formula therefore gives
\begin{equation}
 \int_{\RR^n}\frac{|\widehat{\nu}(\xi)|^2}{|\xi|^{n-1}}\,d\xi
 =\frac{\pi^{(n-1)/2}}{\Gamma((n-1)/2)}
 \int_{\RR^n\times\RR^n}
 \frac{d\nu(x)d\nu(y)}{|x-y|}. \label{eq_1043}
\end{equation}
There are no issues with integrability in (\ref{eq_1043}) since $\nu$ has a bounded density
decaying exponentially at infinity. Combining (\ref{eq_1043}) with (\ref{eq_1918}) proves
(\ref{eq_energy_density}) with
\begin{equation}
 C_n'=\frac{2 \cdot \pi^{(n-1)/2}}
 {\sqrt{2 n} \cdot \omega_{n-1} \cdot \Gamma((n-1)/2)}
 =\frac{\Gamma(n/2)}{\sqrt{2 \pi n }\cdot \Gamma((n-1)/2)} \leq C.
 \nonumber
\end{equation}
\end{proof}

\bigskip
In view of Lemma \ref{lem_energies}, in order to prove Theorem
\ref{thm_main2} we need to bound in absolute value the two expressions
$$ \int_{\RR^n\times\RR^n} \left( \frac{|x-y|}{\sqrt{2n}} \right)^{\pm 1} \,d\nu(x)d\nu(y). $$
To this end, we will apply Taylor's theorem, expressing $(|x-y| / \sqrt{2n})^{\pm 1}$ as a power series in the function
\begin{equation} 
 u(x,y)=\frac{|x-y|^2}{2n} - 1.
\label{eq_1508} \end{equation}
Observe that by isotropicity, for any polynomial $p$ all of whose monomials
are of degree at most~$2$,
\begin{equation} 
 \int_{\RR^n} p(x) d \nu(x) = \int_{\RR^n} p(x) d \mu(x) \, - \,\int_{\RR^n} p(x) d \gamma_n(x) = 0. 
\label{eq_1509} \end{equation}
Consequently, for $k=0,1,2$,
\begin{equation}\label{eq_cancellations}
 \int_{\RR^n\times\RR^n}u(x,y)^k\,d\nu(x)d\nu(y)=0,
 \end{equation}
because  each monomial in the polynomial $u(x,y)^k$ has total degree at most $4$, and hence its
degree in $x$ or in $y$ is at most $2$.
For $k = 3$, our bound for the integral in (\ref{eq_cancellations}) will be rather delicate,
and it will rely on recent advances in the theory of high-dimensional log-concave distributions.
As for the remainder in the $3^{rd}$-order Taylor approximation, it  will be
bounded using standard techniques:

\begin{lemma} For any real number $u > -1$,
$$
 (1+u)^{1/2}
 =1+\frac{u}{2}-\frac{u^2}{8}+\frac{u^3}{16}+R_1(u),
 $$ and
 $$
 (1+u)^{-1/2}
 =1-\frac{u}{2}+\frac{3u^2}{8}-\frac{5u^3}{16}+R_2(u),
 $$
where
\begin{equation}\label{eq_remainders}
-C |u|^4 \leq R_1(u) \leq 0 \leq R_2(u) \leq C\frac{|u|^4}{\sqrt{1+u}}.
\end{equation}
\label{lem_taylor}
\end{lemma}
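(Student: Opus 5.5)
We will prove both expansions with the integral (Lagrange) form of the remainder in Taylor's theorem of order $4$ around $u=0$, applied to $f_1(u)=(1+u)^{1/2}$ and $f_2(u)=(1+u)^{-1/2}$ on $(-1,\infty)$. The coefficients $1,\tfrac12,-\tfrac18,\tfrac1{16}$ and $1,-\tfrac12,\tfrac38,-\tfrac5{16}$ are the binomial coefficients $\binom{1/2}{k}$ and $\binom{-1/2}{k}$ for $k\le 3$, which we check directly. The fourth derivatives are
$$ f_1^{(4)}(v)=-\tfrac{15}{16}(1+v)^{-7/2}, \qquad f_2^{(4)}(v)=\tfrac{105}{16}(1+v)^{-9/2}, $$
so $R_j(u)=\tfrac{1}{3!}\int_0^u (u-v)^3 f_j^{(4)}(v)\,dv$. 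Since $(u-v)^3$ has the sign of $u$ on the segment between $0$ and $u$, and the orientation of the integral contributes the same sign, the product $(u-v)^3\,dv$ is nonnegative for both signs of $u$. Hence $R_1\le 0\le R_2$ immediately.

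For the magnitude bounds we split into cases. For $u\ge 0$ we have $(1+v)^{-7/2}\le 1$, so $|R_1(u)|\le C u^4$. For $-1<u<0$ we avoid the singularity at $u=-1$ differently: the function $R_1(u)=\sqrt{1+u}-P_3(u)$ is continuous and bounded on $[-1,0]$, while $u^4\ge$ const for $u\le -1/2$. So on $[-1,-1/2]$ the bound $|R_1|\le C|u|^4$ follows trivially, and on $[-1/2,0]$ the Lagrange bound applies with $(1+v)^{-7/2}\le 2^{7/2}$.

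For $R_2$ and $u\ge 0$, the Lagrange form gives $R_2(u)\le C u^4$. This is not quite the target $Cu^4/\sqrt{1+u}$ for large $u$. We therefore handle $u\ge 1$ directly: there $R_2(u)=(1+u)^{-1/2}-P(u)$ with $P(u)=1-\tfrac u2+\tfrac{3u^2}{8}-\tfrac{5u^3}{16}$, so $|R_2(u)|\le 1+|P(u)|\le C u^3\le C u^4/\sqrt{1+u}$, since $u^3\sqrt{1+u}\le \sqrt2\, u^4$ for $u\ge1$. On $[-1/2,1]$ the Lagrange bound gives $R_2\le C u^4\le C' u^4/\sqrt{1+u}$. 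On $(-1,-1/2]$ we have $|R_2(u)|\le (1+u)^{-1/2}+C\le C'(1+u)^{-1/2}\le C'' u^4/\sqrt{1+u}$, because $u^4\ge 1/16$ there.

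The only mildly delicate point is obtaining the $1/\sqrt{1+u}$ factor for $R_2$ uniformly near $u=-1$ and at $u=\infty$ simultaneously. The case split above handles this: the Lagrange form is used only on compact subintervals, and crude bounds are used elsewhere.
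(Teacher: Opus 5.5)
Your proof is correct and follows essentially the paper's argument: the signs $R_1\le 0\le R_2$ come from the sign of the fourth derivatives via the integral remainder, the Taylor remainder bound is used on a neighborhood of $0$, and crude bounds handle $u$ near $-1$ (where $|u|^4\ge 1/16$) and large $u$ (where $|R_2|\le Cu^3\le C'u^4/\sqrt{1+u}$). The differences are cosmetic: you use the integral remainder for all $u\ge 0$ in the case of $R_1$, and you split $R_2$ at $u=1$ instead of $u=1/2$.
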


\begin{proof} The $4^{th}$ derivatives of $(1+u)^{1/2}$ and $(1+u)^{-1/2}$
are negative and positive, respectively. If $|u| \leq 1/2$, then the bounds in (\ref{eq_remainders})
follow from any formula for the remainder in Taylor expansion of
order $3$. If $u <-1/2$, then the quantities
$|R_1(u)|$ and $\sqrt{1+u}|R_2(u)|$ are bounded, while $|u|^4\geq1/16$.
When $u>1/2$, these quantities are bounded by $C u^3$ and $C u^{7/2}$, respectively,
so each is at most $C u^4$. \end{proof}

Thus, from Lemma \ref{lem_energies}, Lemma \ref{lem_taylor} and the cancellations in (\ref{eq_cancellations}),
\begin{equation}  \cA = -C_n \int_{\RR^n \times \RR^n} \left[ \frac{u(x,y)^3}{16}+R_1(u(x,y)) \right] d \nu(x) d \nu(y)
\label{eq_2156} \end{equation}
while
\begin{equation}  \cB = C_n' \int_{\RR^n \times \RR^n} \left[ -\frac{5u(x,y)^3}{16}+R_2(u(x,y)) \right] d \nu(x) d \nu(y).
\label{eq_2157} \end{equation}
The next lemma will be useful for handling the third order terms.

\begin{lemma}\label{lem_cubic}
Define
\begin{equation}
 b=\EE[|X|^2X]\in\RR^n
 \qquad \text{and} \qquad 
 T=(\EE X_iX_jX_k)_{i,j,k=1}^n\in\RR^{n\times n\times n}.
\label{eq_2210} \end{equation}
Then,
\begin{equation}\label{eq_cubic_identity}
 \int_{\RR^n\times\RR^n}u(x,y)^3\,d\nu(x)d\nu(y)
 =-\frac{12|b|^2+8\|T\|_{\HS}^2}{(2n)^3},
\end{equation}
where $\|T\|_{\HS}^2=\sum_{i,j,k=1}^n T_{ijk}^2$ is the square of the Hilbert-Schmidt norm of the tensor $T$.
\end{lemma}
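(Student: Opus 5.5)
The plan is to expand $u(x,y)^3$ as a polynomial and use the cancellation property \eqref{eq_1509}: any product $f(x)g(y)$ in which $f$ or $g$ is a polynomial of degree at most $2$ integrates to zero against $\nu\otimes\nu$, by Fubini. Set
$$
 A(x)=|x|^2-n, \qquad B(y)=|y|^2-n, \qquad P(x,y)=\langle x,y\rangle .
$$
Then $2n\,u(x,y)=A(x)+B(y)-2P(x,y)$, so
$$
 (2n)^3 u^3=\sum_{a+b+c=3}\frac{3!}{a!\,b!\,c!}\,(-2)^c A^aB^bP^c .
$$
In each monomial the degree in $x$ is $2a+c$ and the degree in $y$ is $2b+c$. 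Since $P^c$ is a sum of products of the form $x_{i_1}\cdots x_{i_c}\,y_{i_1}\cdots y_{i_c}$, each term is a finite sum of products $f(x)g(y)$ with these degrees. Integrability is not an issue, because $\nu$ has a bounded density decaying exponentially.

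The first step is to discard terms. A term survives only if $2a+c\ge 3$ and $2b+c\ge 3$. Going through the cases:
\begin{itemize}
 \item for $c=0$, one of $a,b$ is at most $1$, so one side has degree at most $2$;
 \item for $c=2$, one of $a,b$ is zero, so one side has degree $2$;
 \item for $c=1$, we need $a=b=1$;
 \item for $c=3$, the term $P^3$ always survives.
\end{itemize}
Hence only $P^3$, with coefficient $-8$, and $ABP$, with coefficient $3!\cdot(-2)=-12$, remain:
$$
 (2n)^3\int u^3\,d\nu\,d\nu=-8\int P^3\,d\nu\,d\nu-12\int ABP\,d\nu\,d\nu .
$$

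The second step is to evaluate the two surviving integrals. Writing $P^3=\sum_{i,j,k}x_ix_jx_k\,y_iy_jy_k$ and applying Fubini gives
$$
 \int P^3\,d\nu\,d\nu=\sum_{i,j,k}\Bigl(\int x_ix_jx_k\,d\nu\Bigr)^2 .
$$
The Gaussian third moments vanish, so $\int x_ix_jx_k\,d\nu=T_{ijk}$, and this integral equals $\|T\|_{\HS}^2$. Similarly,
$$
 \int ABP\,d\nu\,d\nu=\sum_i\Bigl(\int (|x|^2-n)\,x_i\,d\nu\Bigr)^2 .
$$
Here $\int(|x|^2-n)x_i\,d\gamma_n=0$ by symmetry, and $\EE X_i=0$, so the inner integral is $\EE[|X|^2X_i]=b_i$. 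This integral is therefore $|b|^2$, and combining the two evaluations gives \eqref{eq_cubic_identity}.

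There is no serious obstacle here. The only point requiring care is the bookkeeping of which monomials vanish, in particular checking that terms like $AP^2$ die because the $y$-side has degree exactly $2$.
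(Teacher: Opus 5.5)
Your proposal is correct and takes essentially the paper's route: expand the trinomial, discard every term whose degree in $x$ or in $y$ is at most $2$ (by Fubini and \eqref{eq_1509}), and evaluate the two surviving terms, with coefficients $-12$ and $-8$, as the squared norms $|b|^2$ and $\|T\|_{\HS}^2$. The only cosmetic difference is that you absorb the constant into $A=|x|^2-n$ and $B=|y|^2-n$, whereas the paper first uses the $k\le 2$ cancellations to reduce to $(2n)^{-3}\int|x-y|^6\,d\nu\,d\nu$; this spares the paper your extra (trivial) step $\int n\,x_i\,d\nu=0$.
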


\begin{proof}
By the cancellations in (\ref{eq_1509}) and (\ref{eq_cancellations}),
$$
 \int_{\RR^n\times\RR^n}u(x,y)^3\,d\nu(x)d\nu(y)
 =\frac{1}{(2n)^3}
 \int_{\RR^n\times\RR^n}|x-y|^6\,d\nu(x)d\nu(y).
$$
The only terms having degree at least $3$ in both $x$ and $y$
in the trinomial expansion of
$$
 |x-y|^6=\bigl(|x|^2+|y|^2-2\langle x,y\rangle\bigr)^3,
$$
are
$$
 -12|x|^2|y|^2\langle x,y\rangle
 -8\langle x,y\rangle^3.
$$
The other terms vanish after integration against $d\nu(x)d\nu(y)$,
by isotropicity. Consequently,
\begin{align*}
 \int_{\RR^n\times\RR^n}|x-y|^6\,d\nu(x)d\nu(y)
 &=-12 \left| \int_{\RR^n}|x|^2x \,d\nu(x) \right|^2 - 8 \sum_{i,j,k=1}^n \left(\int_{\RR^n}x_ix_jx_k\,d\nu(x) \right)^2.
 \end{align*}
Since the Gaussian third moments vanish, we may replace integration against $d \nu(x)$ by integration against $d \mu(x)$
in the last two integrals, proving (\ref{eq_cubic_identity}).
\end{proof}

\begin{corollary}\label{cor_reduction}
In order to prove Theorem \ref{thm_main2}, it suffices to show that
$$
 |b|^2+\|T\|_{\HS}^2\leq Cn,
 $$
and
\begin{equation}
 \int_{\RR^n\times\RR^n}
 u(x,y)^4\left(1+\frac{1}{\sqrt{1+u(x,y)}}\right)
 \,d\mu(x)d\mu(y)
 \leq\frac{C}{n^2}.
 \label{eq_taylor_errors}
\end{equation}
\end{corollary}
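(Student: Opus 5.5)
The plan is to feed the two assumed bounds into the representations \eqref{eq_2156} and \eqref{eq_2157}, and to estimate the cubic and remainder terms separately.

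\emph{Cubic terms.} By Lemma~\ref{lem_cubic} and the first assumption,
$$\left|\int u^3\,d\nu\,d\nu\right| = \frac{12|b|^2+8\|T\|_{\HS}^2}{(2n)^3}\leq \frac{Cn}{n^3} = \frac{C}{n^2}.$$
Since $C_n\leq 1$ and $C_n'\leq C$ by Lemma~\ref{lem_energies}, the cubic contributions to both $\cA$ and $\cB$ are $O(1/n^2)$.

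\emph{Remainder terms.} Write $R$ for $R_1$ or $R_2$. Expanding $d\nu\,d\nu = (d\mu-d\gamma_n)\otimes(d\mu-d\gamma_n)$ gives
$$\left|\int R(u)\,d\nu\,d\nu\right|\leq \int |R(u)|\,d(\mu+\gamma_n)\otimes d(\mu+\gamma_n),$$
which splits into four integrals against $\mu\otimes\mu$, $\mu\otimes\gamma_n$, $\gamma_n\otimes\mu$ and $\gamma_n\otimes\gamma_n$. By Lemma~\ref{lem_taylor}, $|R_1(u)|+|R_2(u)|\leq C u^4\bigl(1+(1+u)^{-1/2}\bigr)$. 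The $\mu\otimes\mu$ term is then at most $C/n^2$ by \eqref{eq_taylor_errors}.

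Here is the one point needing care: \eqref{eq_taylor_errors} as stated concerns only $\mu\otimes\mu$, yet the mixed and Gaussian terms must also be controlled. I would handle this by noting that the standard Gaussian $\gamma_n$ is itself isotropic and log-concave, so whatever argument establishes \eqref{eq_taylor_errors} for an arbitrary isotropic log-concave $\mu$ also applies to $\gamma_n\otimes\gamma_n$. For the mixed terms, observe that $(X-Y)/\sqrt2$ is isotropic and log-concave whenever $X\sim\mu$ and $Y\sim\gamma_n$ are independent, since convolutions of log-concave measures are log-concave. The same is true for $X,Y$ i.i.d.\ from $\mu$. In every case, therefore, the quantity in \eqref{eq_taylor_errors} has the form
$$\EE\, f\!\left(\frac{|Z|^2}{n}-1\right),\qquad f(u)=u^4\bigl(1+(1+u)^{-1/2}\bigr),$$
for an isotropic log-concave vector $Z = (X-Y)/\sqrt2$.

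I would accordingly interpret the hypothesis \eqref{eq_taylor_errors} as this bound for the difference of two independent isotropic log-concave vectors. Equivalently, I would verify \eqref{eq_taylor_errors} in the form ``for every pair of isotropic log-concave laws,'' which costs nothing because the later proof is uniform over such laws. This gives each of the four pieces a bound of $C/n^2$.

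\emph{Conclusion.} Combining the two estimates yields $|\cA|+|\cB|\leq C/n^2$, which is \eqref{eq_intro_L2}, i.e.\ Theorem~\ref{thm_main2}. The main obstacle is this bookkeeping for the cross terms: checking that the remainder bound is genuinely needed for, and available for, all four product measures, not only $\mu\otimes\mu$. Everything else is direct substitution of Lemmas~\ref{lem_energies}, \ref{lem_taylor} and~\ref{lem_cubic}.
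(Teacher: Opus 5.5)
\textbf{Gap: the mixed terms $\mu\otimes\gamma_n$.} You pass to $\bigl|\int R(u)\,d\nu\,d\nu\bigr|\le\int|R(u)|\,d(\mu+\gamma_n)\otimes d(\mu+\gamma_n)$. This forces you to bound $\int u^4\bigl(1+(1+u)^{-1/2}\bigr)\,d\mu\,d\gamma_n$. The hypothesis \eqref{eq_taylor_errors} of the corollary concerns only $\mu\otimes\mu$, and you never derive the mixed bound from it. Instead you reinterpret the hypothesis as a bound for every pair of isotropic log-concave laws.

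So what you prove is a different reduction, with a strictly stronger second hypothesis. Your route would still yield Theorem~\ref{thm_main2} in the end: $(X-Y)/\sqrt2$ with $Y\sim\gamma_n$ independent is isotropic and log-concave, and the final argument is uniform over such vectors. But it is not a proof of the corollary as stated. The $\gamma_n\otimes\gamma_n$ term is not a problem: it does not depend on $\mu$, and the paper also bounds it separately, either directly or by the same final argument.

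\textbf{Missing idea: keep the signs.} When you take absolute values you discard exactly the sign information that Lemma~\ref{lem_taylor} records in \eqref{eq_remainders}. Since $\cA,\cB\ge 0$, only upper bounds are needed. By the symmetry $u(x,y)=u(y,x)$,
\begin{align*}
-C_n\int R_1(u)\,d\nu\,d\nu &= -C_n\int R_1(u)\,d\mu\,d\mu+2C_n\int R_1(u)\,d\mu\,d\gamma_n-C_n\int R_1(u)\,d\gamma_n\,d\gamma_n,\\
C_n'\int R_2(u)\,d\nu\,d\nu &= C_n'\int R_2(u)\,d\mu\,d\mu-2C_n'\int R_2(u)\,d\mu\,d\gamma_n+C_n'\int R_2(u)\,d\gamma_n\,d\gamma_n.
\end{align*}
Because $R_1\le 0\le R_2$ and $C_n,C_n'>0$, both mixed terms are non-positive and can simply be dropped. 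What remains is controlled by \eqref{eq_taylor_errors} for $\mu\otimes\mu$ and by the fixed Gaussian integral; this is exactly the paper's argument. Your treatment of the cubic terms via Lemma~\ref{lem_cubic} and $C_n\le1$, $C_n'\le C$ is correct as written.
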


\begin{proof}
Recall equations (\ref{eq_2156}) and (\ref{eq_2157}). By (\ref{eq_cubic_identity}), the third order terms in these expressions for $\cA$ and for $\cB$ are at most
$$ \frac{C}{n^3}\left(|b|^2+\|T\|_{\HS}^2\right). $$
Let us now analyze the remainder terms in the expressions (\ref{eq_2156}) and (\ref{eq_2157}) for $\cA$ and $\cB$.
Recalling that $R_1 \leq 0 \leq R_2$, we see that
when writing $\nu = \mu - \gamma_n$ and expanding,  the contributions
of mixed terms involving
$d\mu(x)d\gamma_n(y)$ to the remainder
term are non-positive, both in the case of~$\cA$ and in the case of $\cB$.
Consequently, by using also (\ref{eq_remainders}),
\begin{align}
 \cA+\cB \nonumber
 &\leq\frac{C}{n^3}\left(|b|^2+\|T\|_{\HS}^2\right)\\ \nonumber
 &\qquad
 +C\int_{\RR^n\times\RR^n}
 u(x,y)^4\left(1+\frac{1}{\sqrt{1+u(x,y)}}\right)
 \,d\mu(x)d\mu(y)\\
 &\qquad
 +C\int_{\RR^n\times\RR^n}
 u(x,y)^4\left(1+\frac{1}{\sqrt{1+u(x,y)}}\right)
 \,d\gamma_n(x)d\gamma_n(y). \label{eq_2204}
\end{align}
It remains to bound the last integral in (\ref{eq_2204})
by $C/n^2$. This Gaussian integral may  be estimated directly. Alternatively, since $\gamma_n$ is isotropic
and log-concave, this bound follows from our proof of (\ref{eq_taylor_errors}) for all isotropic, log-concave
distributions in the next section.
\end{proof}

\section{Moment estimates in the log-concave case}

Our main input here is the resolution of the variance conjecture by Klartag and Lehec \cite{KLthin}.
Let $Z = (Z_1,\ldots,Z_n) \in \RR^n$ be an isotropic, log-concave random
vector. By the thin-shell estimate from~\cite{KLthin},
\begin{equation}\label{eq_CK1}
 \Var(|Z|^2)\leq Cn.
\end{equation}
The optimal constant $C = 8$ in (\ref{eq_CK1}) was recently obtained in Chen and Klartag \cite{CK},
where it is also shown that
\begin{equation}\label{eq_CK2}
 \sum_{i,j,k=1}^n(\EE Z_iZ_jZ_k)^2\leq4n.
\end{equation}

\begin{corollary}\label{cor_2208}
In order to prove Theorem \ref{thm_main2}, it suffices to show that
$$
 \int_{\RR^n\times\RR^n}
 u(x,y)^4\left(1+\frac{1}{\sqrt{1+u(x,y)}}\right)
 \,d\mu(x)d\mu(y)
 \leq\frac{C}{n^2}.
$$
\end{corollary}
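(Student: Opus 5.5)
By Corollary~\ref{cor_reduction}, Theorem~\ref{thm_main2} follows from two facts: the cubic bound $|b|^2+\|T\|_{\HS}^2\leq Cn$, and the Taylor remainder bound (\ref{eq_taylor_errors}). So the plan is to derive the cubic bound from (\ref{eq_CK1}) and (\ref{eq_CK2}), applied to $Z=X$, which is isotropic and log-concave. Once that is done, Corollary~\ref{cor_reduction} leaves only (\ref{eq_taylor_errors}) to prove, which is exactly the statement of Corollary~\ref{cor_2208}.

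\emph{The tensor term.} This is literally (\ref{eq_CK2}) with $Z=X$:
$$\|T\|_{\HS}^2=\sum_{i,j,k=1}^n(\EE X_iX_jX_k)^2\leq 4n.$$

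\emph{The vector term.} Since $\EE X=0$, we have $\EE\bigl[(|X|^2-n)X\bigr]=\EE\bigl[|X|^2X\bigr]=b$. We control $|b|$ by duality. Fix a unit vector $\theta\in S^{n-1}$. Then
$$\langle b,\theta\rangle=\EE\bigl[(|X|^2-n)\langle X,\theta\rangle\bigr].$$
By Cauchy--Schwarz,
$$|\langle b,\theta\rangle|\leq\sqrt{\Var(|X|^2)}\,\sqrt{\EE\langle X,\theta\rangle^2}.$$
By isotropicity $\EE\langle X,\theta\rangle^2=1$, and by the thin-shell estimate (\ref{eq_CK1}) $\Var(|X|^2)\leq Cn$. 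Hence $|\langle b,\theta\rangle|\leq\sqrt{Cn}$. Taking the supremum over $\theta$ gives $|b|^2\leq Cn$.

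\emph{Conclusion.} Adding the two estimates yields $|b|^2+\|T\|_{\HS}^2\leq Cn$, so the first hypothesis of Corollary~\ref{cor_reduction} holds. The corollary then reduces Theorem~\ref{thm_main2} to the fourth-moment bound alone, as claimed.

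There is no real obstacle here. The only point that needs a moment's thought is centering $|X|^2$ at $n$ inside the expectation, which uses $\EE X=0$; this lets thin-shell bound the vector $b$ rather than the full tensor $T$. All the genuine difficulty is deferred to (\ref{eq_taylor_errors}).
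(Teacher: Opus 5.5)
Your proposal is correct and matches the paper's proof: you bound $\|T\|_{\HS}^2$ directly by (\ref{eq_CK2}), bound $|b|$ by duality, Cauchy--Schwarz and the thin-shell estimate (\ref{eq_CK1}), and then invoke Corollary~\ref{cor_reduction}. Your explicit centering of $|X|^2$ at $n$, using $\EE X=0$ and $\EE|X|^2=n$, is the step the paper leaves implicit when it writes $\sqrt{\Var(|X|^2)}$.
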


\begin{proof} Recall the definition (\ref{eq_2210}) of $b$ and $T$.
It follows from (\ref{eq_CK2}) that $\| T \|_{\HS}^2 \leq 4n$. Since $X$ is centered and isotropic, it follows from
(\ref{eq_CK1}) and the Cauchy-Schwarz inequality that
$$ |b| = \sup_{\theta \in S^{n-1}} \left[ \EE |X|^2 (X \cdot \theta)  \right] \leq \sup_{\theta \in S^{n-1}} \left[ \sqrt{\Var(|X|^2) \cdot \EE (X \cdot \theta)^2 } \right] \leq \sqrt{8 n}. $$
The conclusion thus follows from Corollary \ref{cor_reduction}.
\end{proof}

We will also use Bourgain's reverse H\"older inequalities for polynomials, proven in \cite{Bourgain}
for convex bodies; see Carbery and Wright \cite[Theorem~7]{CW} for the closely-related log-concave case.
In particular, for a log-concave probability measure $\eta$ and any
polynomial $P$ of degree at most $2$,
\begin{equation}\label{eq_polynomial_moments}
 \|P\|_{L^8(\eta)}\leq C\|P\|_{L^2(\eta)}.
\end{equation}
Applying~\eqref{eq_polynomial_moments} to $P(z)=|z|^2-n$ and
using~\eqref{eq_CK1}, we obtain
\begin{equation}\label{eq_eighth}
 \EE\bigl||Z|^2-n\bigr|^8\leq Cn^4.
\end{equation}

\begin{lemma}\label{lem_negative}
For any isotropic, log-concave random vector $Z$ in $\RR^n$, with $n\geq4$,
$$
 \EE|Z|^{-2}\leq\frac{C}{n}.
$$
\end{lemma}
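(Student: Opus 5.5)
Split $\EE|Z|^{-2}$ according to whether $|Z|^2$ is above or below $n/2$:
$$
 \EE|Z|^{-2}=\EE\bigl[|Z|^{-2}\ind_{\{|Z|^2\geq n/2\}}\bigr]+\EE\bigl[|Z|^{-2}\ind_{\{|Z|^2<n/2\}}\bigr].
$$
The first term is at most $2/n$. For the second term we will use that small values of $|Z|$ are rare, by \eqref{eq_eighth}, and that the singularity $|z|^{-2}$ at the origin is integrable against a log-concave density once $n\geq 3$. We will combine these two facts with H\"older's inequality.

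\textbf{Step 1.} Apply H\"older's inequality with exponents $p=4/3$ and $q=4$:
$$
 \EE\bigl[|Z|^{-2}\ind_{\{|Z|^2<n/2\}}\bigr]\leq \bigl(\EE|Z|^{-8/3}\bigr)^{3/4}\,\PP\bigl(|Z|^2<n/2\bigr)^{1/4}.
$$
By Markov's inequality and \eqref{eq_eighth},
$$
 \PP\bigl(|Z|^2<n/2\bigr)\leq \PP\bigl(\bigl||Z|^2-n\bigr|>n/2\bigr)\leq \frac{2^8\,\EE\bigl||Z|^2-n\bigr|^8}{n^8}\leq \frac{C}{n^4},
$$
so the probability factor is at most $C/n$. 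The exponent $8/3<n$ keeps $|z|^{-8/3}$ locally integrable for every $n\geq 4$. The task therefore reduces to showing $\EE|Z|^{-8/3}\leq C$, or even $\leq C^{n}$ for a suitable exponent choice; in the latter case we balance exponents differently, as described below.

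\textbf{Step 2 (main obstacle).} We need to bound a negative moment of $|Z|$. The plan uses the standard bound on isotropic log-concave densities, $\sup f\leq (Cn)^{n/2}$ (via the isotropic constant; the crude bound $L_n\leq C\sqrt n$, say, suffices). Since $|z|^{-8/3}\leq 1$ on $\{|z|\geq 1\}$, we get
$$
 \EE|Z|^{-8/3}\leq 1+\sup f\int_{|z|<1}|z|^{-8/3}\,dz\leq 1+\|f\|_\infty\,\omega_{n-1}\int_0^1 r^{n-1-8/3}\,dr.
$$
The volume of the unit ball decays like $(C/n)^{n/2}$, and if this cancels against $(Cn)^{n/2}$ we get $\EE|Z|^{-8/3}\leq C^n$. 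That is too large for a fixed H\"older exponent, so we would instead take $q$ of order $n$. Markov's inequality with $8$ moments gives only $n^{-4}$ decay, so the better route is a small-ball estimate: for log-concave $Z$, $\PP(|Z|\leq \epsilon\sqrt n)\leq (C\epsilon)^{cn}$, which is Paouris's small-ball inequality. With this the H\"older step works with $q\sim n$. Alternatively, and more cleanly, use the log-concave small-ball bound $\PP(|Z|\leq t\sqrt n)\leq (Ct)^{c\sqrt n}$ or its stronger version, and integrate directly:
$$
 \EE|Z|^{-2}=\int_0^\infty \PP\bigl(|Z|^{-2}>s\bigr)\,ds.
$$
Split at $s=2/n$ and use Paouris for $|Z|^2\leq n/4$, the bound \eqref{eq_eighth} for the range $n/4\leq|Z|^2\leq n/2$, and the density bound near the origin. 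The delicate point is getting the near-origin contribution down to $O(1/n)$ without losing $C^n$. I would handle it by the log-concave small-ball estimate $\PP(|Z|\leq \epsilon\sqrt n)\leq (C\epsilon)^{n}$ for $\epsilon$ below a constant, which follows from $\|f\|_\infty\leq C^n$ for isotropic log-concave $f$ (using bounded isotropic constant, e.g. from \cite{KLthin}). With this, the near-origin region contributes $\int_0^{c\sqrt n} r^{-2}\, d\PP(|Z|\leq r)\leq C^n n^{-n/2}\int_0^{c\sqrt n} r^{n-3}\,dr=O(c'^n/n)$, which is $\leq C/n$.
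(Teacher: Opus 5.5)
Your final argument is correct and genuinely different from the paper's, but it is buried under detours you do not need, and one attribution is wrong. The only real input is $\|f\|_\infty\le C^n$ for the density $f$ of $Z$, i.e.\ boundedness of the isotropic constant (the Klartag--Lehec resolution of the slicing problem, or thin shell plus the Eldan--Klartag reduction). Given that, a two-piece split finishes at once. With $\kappa_n\le(2\pi e/n)^{n/2}$ the volume of the unit ball,
\[
 \EE|Z|^{-2}\le\frac{1}{c^2n}+\|f\|_\infty\int_{\{|z|<c\sqrt n\}}\frac{dz}{|z|^{2}}
 =\frac{1}{c^2n}+\|f\|_\infty\,\frac{n\kappa_n(c\sqrt n)^{n-2}}{n-2}
 \le\frac{1}{c^2n}+\frac{C^2(Cc)^{n-2}}{n-2},
\]
and choosing $c=1/C$ gives $O(1/n)$. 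Since $|Z|^{-2}\le 1/(c^2n)$ holds trivially on $\{|Z|\ge c\sqrt n\}$, the H\"older step, Paouris's inequality and \eqref{eq_eighth} play no role.

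The attribution: Paouris's small-ball inequality has exponent $c\sqrt n$, not $cn$. The bound $\PP(|Z|\le\epsilon\sqrt n)\le(C\epsilon)^n$ you end up using is simply a consequence of $\|f\|_\infty\le C^n$.

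Your fear that crude isotropic-constant bounds cost a factor $C^n$ is an artifact of H\"older. Split instead at $R=\sqrt n/(2A)$ with $A=\sqrt{2\pi e}\,\|f\|_\infty^{1/n}$. The ball $\{|z|<R\}$ then contributes at most $\frac{n}{n-2}\,2^{2-n}A^2/n$. The shell $\{R\le|Z|<\sqrt{n/2}\}$ contributes at most $4A^2n^{-1}\PP(|Z|^2<n/2)\le CA^2n^{-5}$ by \eqref{eq_eighth}. So even $\|f\|_\infty^{1/n}\le C\sqrt n$ would do.

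The paper avoids high-dimensional density bounds entirely. It averages over a uniformly random $3$-dimensional subspace $E$ and uses the exact identity $\EE_E|\Proj_Ez|^{-2}=(n-2)|z|^{-2}$. It then needs only that the isotropic log-concave vector $\Proj_EZ$ in $\RR^3$ has density bounded by a universal constant, a classical fixed-dimensional fact. There the factor $1/n$ comes from integral geometry, and the lemma uses neither the slicing nor the thin-shell theorem.

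Your route is more direct, with no Grassmannian averaging. On its own, however, it yields only $\EE|Z|^{-2}\le C\|f\|_\infty^{2/n}/n$. To reach the stated bound it must therefore import a deep high-dimensional input: the slicing theorem, or the thin-shell estimate through \eqref{eq_eighth}.
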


\begin{proof} For any subspace $E \subseteq \RR^n$ with $\dim(E) = 3$,
the random vector $\Proj_E(Z)$ is log-concave and isotropic in $E$. Here, $\Proj_E$
is the orthogonal projection operator onto $E$ in $\RR^n$. Consequently
(see e.g. \cite[Theorem~5.14(e)]{LV}), the density of $\Proj_E(Z)$ is bounded
from above by a universal constant $C > 0$. Therefore, for any fixed $3$-dimensional subspace $E \subseteq \RR^n$,
\begin{equation}\label{eq_projection_negative}
 \EE|\Proj_EZ|^{-2}
 \leq 1 + C \int_{\{y\in E \, ; \,|y|\leq1\}}|y|^{-2}\,dy
 =1 + 4\pi C.
\end{equation}
Write $G_{n,3}$ for the space
of all $3$-dimensional linear subspaces of $\RR^n$.
The Grassmannian $G_{n,3}$ admits a unique rotationally-invariant
probability measure. Let $E \in G_{n,3}$ be a random subspace, independent of $Z$, distributed with respect
to this uniform probability measure. For any fixed unit vector $\theta \in S^{n-1}$, a straightforward computation shows that
$$ \EE \frac{1}{|\Proj_E \theta|^{2}}  = \int_{S^{n-1}} \frac{d \sigma_{n-1}(x)}{x_1^2 + x_2^2 + x_3^2} = \frac{\int_0^1  (1 - r^2)^{\frac{n-5}{2}} dr}{\int_0^1 r^{2} (1 - r^2)^{\frac{n-5}{2}} dr} = n - 2. $$
By homogeneity, for any $0 \neq z \in \RR^n$,
\begin{equation}\label{eq_average_projection}
 \EE \frac{1}{|\Proj_E z|^{2}} = |z|^{-2} \cdot (n-2).
\end{equation}
Therefore, by \eqref{eq_projection_negative} and  \eqref{eq_average_projection},
\begin{equation}
 \EE|Z|^{-2} = \frac{1}{n-2} \cdot \EE_{Z,E} \frac{1}{|\Proj_E Z|^{2}} 
 \leq\frac{1 + 4\pi C}{n-2} \leq \frac{\tilde{C}}{n}. 
\tag*{\qedhere}
\end{equation}
\end{proof}

\bigskip
\begin{proof}[Proof of Theorem \ref{thm_main2}]
Let $X'$ be an independent copy of $X$, and write
$$
 Z=\frac{X-X'}{\sqrt{2}}.
$$
The random vector $Z$ is log-concave and isotropic in $\RR^n$.  Moreover, by (\ref{eq_1508}),
$$
 u(X,X')=\frac{|Z|^2}{n} - 1.
$$
According to  (\ref{eq_eighth}) and to Lemma \ref{lem_negative},
$$
 \EE\bigl||Z|^2-n\bigr|^8\leq Cn^4
 \qquad \text{and} \qquad
 \EE|Z|^{-2}\leq\frac{C}{n}.
$$
Consequently, by the Cauchy-Schwarz inequality,
\begin{align*}
 \int_{\RR^n\times\RR^n} &
 u(x,y)^4\left(1+\frac{1}{\sqrt{1+u(x,y)}}\right)
 \,d\mu(x)d\mu(y) = \EE\left[
 u(X,X')^4\left(1+(1+u(X,X'))^{-1/2}\right) \right] \\ &
 =\frac{1}{n^4} \cdot \EE\left[
 (|Z|^2-n)^4\left(1+\frac{\sqrt{n}}{|Z|}\right)\right]
 \leq\frac{1}{n^4} \cdot
 \sqrt{\EE\bigl||Z|^2-n\bigr|^8}
 \left(1+\sqrt{n\EE|Z|^{-2}}\right)
 \leq\frac{C}{n^2}.
\end{align*}
This proves (\ref{eq_taylor_errors}) for every isotropic,
log-concave probability measure $\mu$ in $\RR^n$, including $\gamma_n$.
The conclusion follows from Corollary \ref{cor_2208}.
\end{proof}

\begin{proof}[Proof of Theorem \ref{thm_main}]
For $n \leq 3$, the conclusion follows from $d_\theta\leq1$.
Assume that $n\geq 4$,
and write
$$
 H_\theta=F_\theta-\Phi.
$$
The function $H_\theta$ is absolutely continuous, tending to zero at $-\infty$, with
$H_\theta'=p_\theta-\varphi$ almost everywhere. Moreover, for almost every $\theta \in S^{n-1}$, Theorem
\ref{thm_main2} implies that  $H_\theta,H_\theta'\in L^2(\RR)$.
For any such $\theta \in S^{n-1}$ and for any $t\in\RR$,
\begin{equation}
 H_\theta(t)^2
 =2\int_{-\infty}^t H_\theta(s)H_\theta'(s)\,ds
 \leq \|H_\theta\|_2^2 + \|H_\theta'\|_2^2,
\label{eq_1513} \end{equation}
where we used that $2ab\leq a^2+b^2$ in the last passage.
Recall that $d_{\theta} = \sup_{t \in \RR} |H_{\theta}(t)|$.
From (\ref{eq_1513}) and Theorem \ref{thm_main2} we thus conclude that
\begin{equation}
 \int_{S^{n-1}}d_\theta^2\,d\sigma_{n-1}(\theta)
 \leq \int_{S^{n-1}} \left[ \|H_\theta\|_2^2 + \|H_\theta'\|_2^2 \right] d\sigma_{n-1}(\theta)
 \leq\frac{C}{n^2}. \nonumber 
\end{equation}
\end{proof}

\bigskip
\noindent School of Mathematical Sciences, Tel Aviv University,
Tel Aviv, Israel; and \\
Department of Mathematics, Weizmann Institute of Science,
Rehovot, Israel. \\
\textit{e-mail:} \texttt{klartagb@tau.ac.il}

\end{document}